\newcommand{\bd}{{\mathbb{D}}}
\newcommand{\bn}{{\mathbb{N}}}
\newcommand{\br}{{\mathbb{R}}}
\newcommand{\bc}{{\mathbb{C}}}
\newcommand{\bbc}{{\bar{\mathbb{C}}}}
\newcommand{\bt}{{\mathbb{T}}}
\newcommand{\fe}{{\mathfrak{e}}}
\newcommand{\ca}{{\mathcal{A}}}
\newcommand{\cf}{{\mathcal{F}}}
\newcommand{\css}{{\mathcal{S}}}
\renewcommand{\a}{\alpha}
\renewcommand{\b}{\beta}
\renewcommand{\l}{\lambda}
\newcommand{\s}{\sigma}
\renewcommand{\d}{\delta}
\newcommand{\dd}{\Delta}
\newcommand{\oo}{\Omega}
\newcommand{\g}{\gamma}
\renewcommand{\gg}{\Gamma}
\newcommand{\eps}{\varepsilon}
\newcommand{\z}{\zeta}
\newcommand{\nt}{\noindent}
\newcommand{\bsl}{\backslash}
\newcommand{\pt}{\partial}
\newcommand{\ti}{\tilde}
\newcommand{\lp}{\left(}
\newcommand{\rp}{\right)}
\newcommand{\lt}{\left}
\newcommand{\rt}{\right}
\DeclareMathOperator{\di}{\rm d}
\numberwithin{equation}{section}
\newtheorem{theorem}{Theorem}[section]
\newtheorem{lemma}[theorem]{Lemma}
\newtheorem{corollary}[theorem]{Corollary}
\theoremstyle{definition}
\newtheorem{remark}[theorem]{Remark}
\begin{document}

\title[A Blaschke-type condition and finite-band operators]
{A Blaschke-type condition for analytic functions on finitely connected domains.
Applications to complex perturbations of  a finite-band selfadjoint operator}
\author[L. Golinskii, S. Kupin]{L. Golinskii and S. Kupin}

\address{Mathematics Division, Institute for Low Temperature Physics and
Engineering, 47 Lenin ave., Kharkov 61103, Ukraine}
\email{leonid.golinskii@gmail.com}

\address{IMB, Universit\'e Bordeaux 1, 351 cours de la Lib\'eration, 33405 Talence Cedex, France}
\email{skupin@math.u-bordeaux1.fr}

\date{June, 3, 2011}

\thanks{The second author is partially supported by  ANR AHPI, ANR FRAB grants}
\keywords{Blaschke-type estimates, Lieb--Thirring type
inequalities, finite-band selfadjoint operators, complex compact
perturbation} \subjclass[2000]{Primary: 30C15; Secondary: 47B36}

\begin{abstract}
This is a sequel of the article by Borichev--Golinskii--Kupin
\cite{bgk} where the authors obtain Blaschke-type conditions for
special classes of analytic functions in the unit disk which satisfy
certain growth hypotheses. These results were applied  to get
Lieb--Thirring inequalities for complex compact perturbations of a
selfadjoint operator with a simply connected resolvent set.

The first result of the present paper is an appropriate local version of the Blaschke-type condition from \cite{bgk}. We apply it to obtain a similar condition for an analytic function in a finitely
connected domain of a special type. Such condition is by and large the same as a Lieb--Thirring type
inequality for complex compact perturbations of a selfadjoint
operator with a finite-band spectrum. A particular case of this
result is the Lieb--Thirring inequality for a selfadjoint
perturbation of the Schatten class of a periodic (or a finite-band) Jacobi matrix. The latter result seems to be new in such generality even in this framework.
\end{abstract}

\maketitle

\vspace{-0.5cm}
\section*{Introduction}
\label{s0}

Let $e=\{\a_j, \b_j\}_{j=1,\dots, n+1}\subset \br$ be a set of
distinct points. We suppose that
\begin{equation}\label{e0}
-\infty <\a_1<\b_1<\a_2<\b_2<\dots <\a_{n+1}<\b_{n+1}< +\infty.
\end{equation}
Let also
\begin{equation}\label{e01}
\fe=\bigcup^{n+1}_{j=1} \fe_j, \quad \fe_j=[\a_j,\b_j],
\end{equation}
and $\oo:=\bbc\bsl \fe$. For a function $f$
analytic in $\oo$, $f\in \ca(\oo)$, $Z_f$ stands for the set of
the zeros counting the multiplicities. By $\di(\l,M)$ we
denote the distance between a point $\l$ and a set $M$.

Our main functional theoretic result looks as follows.

\begin{theorem}\label{t2}
Let $f\in \ca(\oo), \ |f(\infty)|=1$, and, for $p, q\ge 0$
\begin{equation}\label{e3}
\log |f(\l)|\le \frac{K_1}{\di^p(\l,\fe) \di^q(\l,e)}.
\end{equation}
Then for any $0<\eps<1$
\begin{equation}\label{e4}
\sum_{\l\in Z_f}
\di^{p+1+\eps}(\l,\fe)\, \di^{a(p,q,\eps)}(\l,e)\, (1+|\l|)^{b(p,q,\eps)}\le
C\cdot K_1,
\end{equation}
where
\begin{equation*}
\begin{split}
a(p,q,\eps) &=\frac{(p+2q-1+\eps)_+ -(p+1+\eps)}2\,, \\
b(p,q,\eps) &=(p+q-1+\eps)_+-\frac{(p+2q-1+\eps)_+ +p+1+\eps}2\,.
\end{split}
\end{equation*}
\end{theorem}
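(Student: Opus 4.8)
The plan is to reduce Theorem~\ref{t2} to a \emph{local} Blaschke-type condition on the unit disk of the kind proved in \cite{bgk}, transferred to $\oo$ by a finite family of conformal charts sitting along $\fe$, plus one direct estimate near $\infty$. The local statement I would use is: if $g\in\ca(\bd)$, $g\not\equiv 0$, $|g|\le e^{CK_1}$ near a closed sub-arc $\gg_0\subset\bt$, and
\[
\log|g(z)|\le \frac{CK_1}{(1-|z|)^{p}\,|z-\z_0|^{\mu}}\quad\text{for }z\text{ near }\gg,
\]
where $\gg\subset\bt$ is the complementary arc and $\z_0\in\gg$, then the zeros of $g$ in a fixed Stolz-type neighbourhood of $\gg$ satisfy $\sum(1-|z|)^{p+1+\eps}|z-\z_0|^{(\mu-1+\eps)_+}\le CK_1$. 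This is the local companion of the BGK condition: one covers a neighbourhood of $\gg$ by Carleson boxes and runs the Jensen/layer-cake argument of \cite{bgk} box by box, the arc $\gg_0$ entering only through the crude bound; the value of $g$ at an interior base point is normalized away against $f(\infty)$, as in \cite{bgk}.

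Next I build the charts. Fix a small $\r>0$ depending only on $e$ and cover the $\r$-neighbourhood of $\fe$ by finitely many open sets of two types. \emph{(i)} Around a point of the relative interior of some $\fe_j$, a small disk $V$; then $V\cap\oo$ is a half-disk, conformally equivalent to $\bd$ by a map $\ps$ taking the diameter $V\cap\fe$ onto an arc $\gg\subset\bt$ and the rest of $\pt(V\cap\oo)$ onto $\bt\bsl\gg$. Here $\di(\l,\fe)\asymp 1-|\ps(\l)|$, $\di(\l,e)\asymp 1$, and \eqref{e3} pulls back to $\log|f\circ\ps^{-1}(z)|\le CK_1(1-|z|)^{-p}$ near $\gg$, and $\le CK_1$ near $\bt\bsl\gg$. \emph{(ii)} Around an endpoint $c\in e$, a small disk $V$; then $V\cap\oo$ is a slit disk, $w=\sq{\l-c}$ (suitable branch) carries it onto a half-disk, and a further conformal map $\chi$ sends that onto $\bd$ with $c\mapsto\z_0\in\bt$. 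Setting $\ps:=\chi\circ\sq{\,\cdot-c\,}$ and using that $\chi$ is bi-Lipschitz near $0$, one checks the distortion estimates
\[
|\ps(\l)-\z_0|\asymp\di(\l,e)^{1/2},\qquad 1-|\ps(\l)|\asymp\frac{\di(\l,\fe)}{\di(\l,e)^{1/2}}\qquad(\l\in V\cap\oo),
\]
the second one because $\di(\l,\fe)\asymp|\sq{\l-c}|\,|\re\sq{\l-c}|$. Hence \eqref{e3} pulls back to $\log|f\circ\ps^{-1}(z)|\le CK_1(1-|z|)^{-p}|z-\z_0|^{-(p+2q)}$ near $\gg=\ps(V\cap\fe)$, and $\le CK_1$ near $\bt\bsl\gg$.

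Now apply the local condition in each chart, with $\mu=p+2q$ for type \emph{(ii)} and the plain BGK bound for type \emph{(i)}, and translate the conclusion back through the distortion estimates. The exponent bookkeeping is exactly $\tfrac{(p+2q-1+\eps)_+-(p+1+\eps)}{2}=a(p,q,\eps)$, giving for the zeros of $f$ in each $V\cap\oo$
\[
\sum \di(\l,\fe)^{p+1+\eps}\,\di(\l,e)^{a(p,q,\eps)}\le C\,K_1 ,
\]
which is the summand of \eqref{e4} since $(1+|\l|)^{b}\asymp1$ on the bounded set $V$ (and the $\di(\l,e)$-power is harmless in type \emph{(i)}, where $\di(\l,e)\asymp1$). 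Summing over the finitely many charts accounts for all zeros with $\di(\l,\fe)\lesssim\r$; the (finitely many, by analyticity) zeros in the compact set $\{\di(\l,\fe)\ge\r,\ |\l|\le R_0\}$ are controlled by Jensen's formula on a fixed larger compact subset of $\oo$, their number being $\le CK_1$ (here $f$ is bounded by $e^{CK_1}$ and $|f(\infty)|=1$) and the weight bounded. Finally, for $|\l|\ge R_0$ set $\ti f(\mu)=f(1/\mu)$: then $\ti f(0)=f(\infty)$, $|\ti f(0)|=1$, $\log|\ti f(\mu)|\le CK_1|\mu|^{p+q}$, so Jensen at the origin gives $\card\{\,|\l|>1/\r'\,\}\le CK_1(\r')^{p+q}$ for small $\r'$, and a layer-cake integration — convergent because $(p+q-1+\eps)_+<p+q$ (here $\eps<1$) — yields $\sum_{|\l|\ge R_0}(1+|\l|)^{(p+q-1+\eps)_+}\le CK_1$. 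This is the tail of \eqref{e4}, since there $\di(\l,\fe)\asymp\di(\l,e)\asymp 1+|\l|$ and $p+1+\eps+a(p,q,\eps)+b(p,q,\eps)=(p+q-1+\eps)_+$. Adding the three contributions proves \eqref{e4}.

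I expect the main difficulty to be organizational rather than analytic: arranging the charts so that every zero of $f$ is counted essentially once (the near-$\fe$ charts must overlap, and the square-root charts near the endpoints have to match the half-disk charts along $\fe_j$ and the compact "bulk" region), and verifying the two distortion estimates near an endpoint uniformly — it is precisely the $\sq{\,\cdot-c\,}$ behaviour, together with the analysis at $\infty$, that forces the exponents $a(p,q,\eps)$ and $b(p,q,\eps)$.
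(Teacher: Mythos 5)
Your route is genuinely different from the paper's. The paper uses a \emph{single global chart}: the Klein--Koebe--Poincar\'e uniformization $\l:\bd\to\oo$, automorphic with respect to a Fuchsian group $\gg$, with $\l(0)=\infty$; it proves distortion estimates for this covering map on the (closure of the) fundamental domain (Lemma~\ref{lem1}: $\di(\l,e)\asymp\di^2(w,E)/|w|$, $\di(\l,\fe)\asymp\di(w,\bt)\di(w,E)/|w|$), pulls back \eqref{e3} to the disk, and applies a local Blaschke-type bound on a circular polygon $G\supset\cf$ (Theorem~\ref{t4}). You instead work with a \emph{finite atlas of local charts} glued along $\fe$ (half-disk charts in the interior of the bands, square-root charts at the endpoints), plus a bulk region and a tail at $\infty$, each handled separately. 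Your square-root chart distortion estimates are, up to a $\re$/$\im$ branch-dependent typo, exactly the endpoint part of Lemma~\ref{lem1} ($|\ps(\l)-\z_0|\asymp\di(\l,e)^{1/2}$, $1-|\ps(\l)|\asymp\di(\l,\fe)/\di(\l,e)^{1/2}$ mirror $\di(w,E)\asymp(\di(\l,e)/(1+|\l|))^{1/2}$, $\di(w,\bt)\asymp\di(\l,\fe)/(\di(\l,e)(1+|\l|))^{1/2}$), and your exponent bookkeeping reproducing $a(p,q,\eps)$ and, for the tail, $p+1+\eps+a+b=(p+q-1+\eps)_+$ is correct. What the uniformization buys the paper is a single normalization for free: the base point of the local Blaschke bound is $0=\l^{-1}(\infty)$, where $|F(0)|=|f(\infty)|=1$. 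What your atlas buys is elementariness --- no Fuchsian groups and no covering-space theory --- at the cost of more bookkeeping (chart overlaps, the bulk annulus, and the tail), which you correctly anticipate.

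There is one genuine gap you gloss over, and it is precisely the place the uniformization is designed to sidestep: \emph{the normalization of the chart functions}. In your local Blaschke statement, the hypothesis that $|g|\le e^{CK_1}$ near the far arc $\gg_0$ is an \emph{upper} bound and cannot replace the normalization $|g(z_*)|\ge 1$ at an interior base point --- otherwise $g=\eps\prod(z-z_j)/(1-\bar z_jz)$ with many $z_j$ in a Stolz neighbourhood of $\gg$ would be a counterexample for any $K_1>0$. Your one-line remedy, that ``the value of $g$ at an interior base point is normalized away against $f(\infty)$,'' does not make sense as written: $\infty$ is not in any of the near-$\fe$ charts, and $f$ may be arbitrarily small at any fixed point of $V\cap\oo$ you pick as a base point. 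This can be repaired, but it requires a real argument: e.g., work on $\oo_\r=\{\di(\l,\fe)>\r\}$, where $\log|f|\le CK_1$; use the Green's function $G_{\oo_\r}(\cdot,\infty)$ and the Poisson--Jensen representation together with $\log|f(\infty)|=0$ to bound $\sum_{\l_j\in Z_f\cap\oo_\r}G_{\oo_\r}(\l_j,\infty)\le CK_1$, and then deduce that $\log|f|\ge -CK_1$ on a large subset of each annular collar $\pt V\cap\oo_\r$, from which a usable base point in each chart can be extracted. You should either prove this or replace the local Blaschke input with one whose conclusion carries a $-\log|g(z_*)|$ on the right-hand side and show that the sum of these terms over your finitely many charts is $\le CK_1$. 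Until that is done, the chart step is not complete. A secondary, smaller, point: ``Jensen's formula'' for the bulk compact set $\{\di(\l,\fe)\ge\r,\ |\l|\le R_0\}$ is not literally Jensen on a disk, since $\oo_\r$ is $(n+1)$-connected; the Green's-function form of Jensen (as above) is the correct tool, and it is the same estimate that resolves the normalization problem, so the two issues should be treated together.
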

As usual, $x_+=\max\{x, 0\}$.
Here and in the sequel $C=C(\fe,p,q,\eps)$ stands for a generic positive constant which depends on indicated parameters.
Of course, inequality \eqref{e4} looks somewhat cumbersome, and it can be simplified in specific situations. Here are two examples.

\begin{corollary}\label{t02}
Let $f\in \ca(\oo), \ |f(\infty)|=1$, and, for $p, q\ge 0$,
$p+q\ge1$
\begin{equation*}
\log |f(\l)|\le \frac{K_1}{\di^p(\l,\fe) \di^q(\l,e)}.
\end{equation*}
Then for any $0<\eps<1$
\begin{equation}\label{e004}
\sum_{\l\in Z_f}
\frac{\di^{p+1+\eps}(\l,\fe)\, \di^{q-1}(\l,e)}{1+|\l|}\le
C\cdot K_1.
\end{equation}
\end{corollary}

The case $q=0$ is important for applications.

\begin{corollary}\label{t1}
Let $f\in \ca(\oo), \ |f(\infty)|=1$, and
\begin{equation}\label{e1}
\log |f(\l)|\le \frac{K_1}{\di^p(\l, \fe)}\,, \quad p\ge0.
\end{equation}
Then for any $0<\eps<1$
\begin{equation}\label{e2}
\sum_{\l\in Z_f}
\frac{\di^{p+1+\eps}(\l,\fe)}{\di(\l,e) (1+|\l|)}\le C\cdot K_1,
\end{equation}
as long as $p\ge1$, and
\begin{equation}\label{e21}
\sum_{\l\in Z_f}
\frac{\di^{p+1+\eps}(\l,\fe)}{(\di(\l,e)\, (1+|\l|))^{(p+1+\eps)/2}}\le C\cdot K_1
\end{equation}
for $p<1$.
\end{corollary}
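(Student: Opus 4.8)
The plan is to deduce Corollary~\ref{t1} from Theorem~\ref{t2} by simply setting $q=0$ there, evaluating the exponents $a(p,0,\eps)$ and $b(p,0,\eps)$, and then — this being the only point with any real content — repairing the range of $\eps$ in the case $p<1$, where the bare substitution does not yet deliver~\eqref{e21} for \emph{every} $0<\eps<1$. First I would note that with $q=0$ the hypothesis~\eqref{e3} reduces exactly to~\eqref{e1}, so Theorem~\ref{t2} applies and gives, for each $0<\eps<1$,
\[
\sum_{\l\in Z_f}\di^{p+1+\eps}(\l,\fe)\,\di^{a(p,0,\eps)}(\l,e)\,(1+|\l|)^{b(p,0,\eps)}\le C\,K_1 .
\]

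The sign of $p-1+\eps$ then decides everything. If $p\ge1$, then $p-1+\eps>0$, so $(p-1+\eps)_+=p-1+\eps$, and a one-line computation yields $a(p,0,\eps)=b(p,0,\eps)=-1$; the displayed estimate is precisely~\eqref{e2}. If $p<1$ and $0<\eps\le1-p$, then $p-1+\eps\le0$, so $(p-1+\eps)_+=0$, and the same bookkeeping gives $a(p,0,\eps)=b(p,0,\eps)=-\tfrac{p+1+\eps}{2}$; now the estimate is exactly~\eqref{e21}.

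It remains to treat $p<1$ with $1-p<\eps<1$, where Theorem~\ref{t2} only reproduces the weights $-1,-1$ rather than those of~\eqref{e21}. Here I would bootstrap from the case already settled: apply the previous step with $\eps$ replaced by $\eps_1:=1-p\in(0,1)$ to obtain
\[
\sum_{\l\in Z_f}\di^{p+1+\eps_1}(\l,\fe)\,\bigl(\di(\l,e)\,(1+|\l|)\bigr)^{-(p+1+\eps_1)/2}\le C\,K_1 ,
\]
and compare summands for $\eps$ and for $\eps_1$. Since $e\subset\fe$ we have $\di(\l,\fe)\le\di(\l,e)$, and trivially $\di(\l,\fe)\le C(1+|\l|)$, so $\di^2(\l,\fe)\le C\,\di(\l,e)(1+|\l|)$; hence, with $\d:=\eps-\eps_1\in(0,1)$,
\[
\frac{\di^{p+1+\eps}(\l,\fe)\,(\di(\l,e)(1+|\l|))^{-(p+1+\eps)/2}}{\di^{p+1+\eps_1}(\l,\fe)\,(\di(\l,e)(1+|\l|))^{-(p+1+\eps_1)/2}}
=\Bigl(\frac{\di^2(\l,\fe)}{\di(\l,e)(1+|\l|)}\Bigr)^{\d/2}\le C .
\]
Multiplying the $\eps_1$-estimate termwise by this bounded factor gives~\eqref{e21} for the given $\eps$, at the cost of a larger constant $C=C(\fe,p,\eps)$. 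The only genuine subtlety — and so the main, rather mild, obstacle — is exactly this last reduction: it cannot be avoided because~\eqref{e2} does \emph{not} imply~\eqref{e21} once $1-p<\eps$ (the two weights are not comparable, one being the larger near the endpoints $e$ and the other at infinity), so one is forced back to a small value of $\eps$ for which $(p+1+\eps)/2\le1$, i.e. to the regime in which the substitution $q=0$ in Theorem~\ref{t2} is directly conclusive.
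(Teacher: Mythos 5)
Your proposal is correct, and it takes the only natural route: specialize Theorem~\ref{t2} to $q=0$ and evaluate the exponents $a,b$. The paper itself gives no written argument for Corollary~\ref{t1} (it is presented as an immediate consequence of Theorem~\ref{t2}), so the honest comparison is that you have supplied a step the bare substitution does not deliver. Concretely: for $p\ge1$ one always has $(p-1+\eps)_+=p-1+\eps$ and a short computation gives $a=b=-1$, which is~\eqref{e2}; for $p<1$ the clipping $(\cdot)_+$ changes behaviour at $\eps=1-p$, so the direct substitution only produces~\eqref{e21} on $0<\eps\le 1-p$, while on $1-p<\eps<1$ it again produces~\eqref{e2}, and the two weights $\bigl(\di(\l,e)(1+|\l|)\bigr)^{-1}$ and $\bigl(\di(\l,e)(1+|\l|)\bigr)^{-(p+1+\eps)/2}$ are genuinely incomparable (one wins near $e$, the other at infinity). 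Your bootstrap from $\eps_1=1-p\in(0,1)$, using $e\subset\fe$ and boundedness of $\fe$ to get $\di^2(\l,\fe)\le C\,\di(\l,e)(1+|\l|)$ and hence a uniform bound on the ratio of summands, closes this gap correctly; the degenerate case $p=0$ is harmless since the problematic range $1-p<\eps<1$ is then empty, and the constant you pick up still has the allowed dependence $C(\fe,p,\eps)$. In short: same approach as the paper intends, but you have correctly identified and repaired a nontrivial point that a careless reading of ``set $q=0$'' would miss.
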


\bigskip

All operators appearing in the present paper act on a separable Hilbert space $H$. Consider a (bounded) selfadjoint operator $A_0$ defined on $H$. We suppose it to be finite-band, i.e., for its spectrum
$$ \s(A_0)=\s_{ess}(A_0)=\fe, $$
where $\fe$ looks like in \eqref{e01}. A typical example here is a double
infinite periodic Jacobi matrix. Let $B\in \css_p$, the Schatten
class of operators, $p\ge 1$. We do not suppose $B$ to be
selfadjoint. By the Weyl theorem, see, e.g., \cite{kr1}, the
essential spectrum $\s_{ess}(A)$, $A=A_0+B$, coincides with
$\s_{ess}(A_0)$.

We want to have some information on the distribution of the
discrete spectrum $\s_d(A):=\s(A)\backslash\s_{ess}(A)$, which
consists of eigenvalues of finite algebraic multiplicity. It is
clear that the points from $\s_d(A)$ can only accumulate to $\fe$.
Here is the quantitative version of this intuition.

\begin{theorem}\label{t3} Let $A_0$ be as described above, $B\in \css_p$ and $A=A_0+B$. Then, for $0<\eps<1$ and $p\ge1$
\begin{equation}\label{e5}
\sum_{\l\in\s_d(A)}
\frac{\di^{p+1+\eps}(\l,\fe)}{\di(\l,e)\, (1+|\l|)}\le C\cdot
\|B\|_{\css_p},
\end{equation}
and for $0\le p<1$
\begin{equation}\label{e51}
\sum_{\l\in Z_f}
\frac{\di^{p+1+\eps}(\l,\fe)}{(\di(\l,e)\, (1+|\l|))^{(p+1+\eps)/2}}\le C\cdot \|B\|_{\css_p}.
\end{equation}
\end{theorem}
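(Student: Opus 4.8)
The plan is to derive Theorem~\ref{t3} from Corollary~\ref{t1} by passing to the (regularized) perturbation determinant, whose zero set records the discrete spectrum of $A$. Put $m=\lceil p\rceil$ and
$$ f(\l):=\det\nolimits_m\bigl(I+B(A_0-\l)^{-1}\bigr),\qquad\l\in\oo. $$
First I would check the three structural properties of $f$. Since $\oo=\bbc\bsl\fe$ is exactly the resolvent set of the selfadjoint operator $A_0$ (whose spectrum is $\fe$), the map $\l\mapsto(A_0-\l)^{-1}$ is operator-analytic on $\oo$; as $B\in\css_p$, the map $\l\mapsto B(A_0-\l)^{-1}$ is then analytic from $\oo$ into $\css_p$, so $f\in\ca(\oo)$ is single-valued. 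Because $\|(A_0-\l)^{-1}\|\to0$ as $\l\to\infty$, we get $B(A_0-\l)^{-1}\to0$ in $\css_p$, whence $f(\infty)=1$, i.e. $|f(\infty)|=1$. Finally, by the Gohberg--Krein theory of perturbation determinants (see, e.g., \cite{kr1}), for $\l_0\in\oo$ the order of vanishing of $f$ at $\l_0$ equals the algebraic multiplicity of $\l_0$ as an eigenvalue of $A=A_0+B$, and $f$ has no other zeros; since $\s_d(A)\subset\oo$ (recall $\s_{ess}(A)=\s_{ess}(A_0)=\fe$), this identifies $Z_f$ with $\s_d(A)$, multiplicities included.

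The next step is a pointwise bound on $\log|f|$. For $p>1$ I would use the standard estimate for regularized determinants, $|\det\nolimits_m(I+T)|\le\exp\bigl(\Gamma_p\|T\|_{\css_p}^p\bigr)$ for $T\in\css_p$, $m=\lceil p\rceil$; for $0<p\le1$ the ordinary determinant suffices, since $\log|\det(I+T)|\le\sum_j\log\bigl(1+s_j(T)\bigr)\le C_p\sum_j s_j(T)^p=C_p\|T\|_{\css_p}^p$ by the elementary inequality $\log(1+s)\le C_p s^p$ on $[0,\infty)$. Combining either bound with
$$ \bigl\|B(A_0-\l)^{-1}\bigr\|_{\css_p}\le\|B\|_{\css_p}\,\bigl\|(A_0-\l)^{-1}\bigr\|=\frac{\|B\|_{\css_p}}{\di(\l,\fe)} $$
(the equality because $A_0$ is selfadjoint with $\s(A_0)=\fe$) gives
$$ \log|f(\l)|\le\frac{C_p\,\|B\|_{\css_p}^p}{\di^p(\l,\fe)},\qquad\l\in\oo, $$
which is precisely hypothesis~\eqref{e1} of Corollary~\ref{t1} with $K_1=C_p\|B\|_{\css_p}^p$. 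Feeding it into the corollary, estimate~\eqref{e2} yields \eqref{e5} for $p\ge1$ and estimate~\eqref{e21} yields \eqref{e51} for $0\le p<1$, the constant $C_p$ being absorbed into $C$ (note $C_1=1$, so for $p=1$ the right-hand side is exactly $C\|B\|_{\css_1}$; for $p>1$ it is the $\css_p$-quasinorm to the power~$p$).

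There is no obstacle of real substance here: Corollary~\ref{t1} carries all the analytic weight, and the remaining ingredients are classical. The only points that need a little care are the $\css_p$-norm estimate of the regularized determinant for \emph{non-integer} $p$ --- for which one invokes a refinement of the Dunford--Schwartz inequality rather than the elementary bound available when $p$ is an integer --- together with the routine observation that $\di(\l,e)$ and $1+|\l|$ never enter hypothesis~\eqref{e1}, so that the general exponents $a(p,q,\eps)$, $b(p,q,\eps)$ of Theorem~\ref{t2} (taken at $q=0$) collapse to the clean form recorded in Corollary~\ref{t1}.
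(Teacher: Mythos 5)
Your proposal is correct and takes essentially the same route as the paper: reduce to the regularized perturbation determinant $g_p(\l)=\det_{\lceil p\rceil}\bigl((A-\l)(A_0-\l)^{-1}\bigr)=\det_{\lceil p\rceil}\bigl(I+B(A_0-\l)^{-1}\bigr)$, whose zero set (with multiplicity) coincides with $\s_d(A)$, bound $\log|g_p|$ by $C_p\|B\|_{\css_p}^p/\di^p(\l,\fe)$ via the standard Schatten-norm estimate and the resolvent identity $\|(A_0-\l)^{-1}\|=1/\di(\l,\fe)$ for the selfadjoint finite-band $A_0$, and then invoke Corollary~\ref{t1}. Your closing observation is also well taken: as both your argument and the paper's own intermediate bound show, the right-hand side of \eqref{e5}--\eqref{e51} should properly read $C\|B\|_{\css_p}^p$ rather than $C\|B\|_{\css_p}$ once $p\neq1$.
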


In such generality the above inequality seems to be new even for the case when $A$ is a selfadjoint
perturbation of a periodic selfadjoint Jacobi matrix $A_0$.

\begin{remark} The case $n=0$, i.e., $\s(A_0)=[\a,\b]$, is not
exceptional. The point is that for $e=\{\a,\b\}$
$$ C_1|(\l-\a)(\l-\b)|\le \di(\l,e)(1+|\l|) \le
C_2|(\l-\a)(\l-\b)|, \quad \l\in\bc, $$ 
with  absolute
constants  $C_{1,2}$, so we come to Theorem 2.3 from \cite{bgk}.
\end{remark}

For the Lieb--Thirring inequalities for nonselfadjoint compact
perturbations of the discrete Laplacian see also Golinskii--Kupin \cite{gk},
Hansmann--Katriel \cite{hk}. A few interesting results of the same flavor on Lieb--Thirring inequalities for selfadjoint Jacobi matrices and Schr\"odinger operators are in  Hun\-dert\-mark--Simon \cite{hu1}, Damanik--Killip--Simon \cite{da1} and Frank--Simon \cite{fra1}.

\smallskip
As usual, we write $\bd=\{z: |z|<1\}$ for the unit disk,
$\bt=\{z: |z|=1\}$ for the unit circle, and $B(w_0,r)=\{w: |w-w_0|< r\}$ for balls in the complex plane. Sometimes, we label the balls by the variable of the corresponding complex plane, i.e. $B_w(z_0,r)$ ($B_\l(z_0, r)$) stays for a ball in the $w$-plane (the $\l$-plane), respectively.

\section{Local version of Borichev--Golinskii--Kupin Theorem}

We begin with the result of Borichev--Golinskii--Kupin
\cite[Theorem 0.2]{bgk} and its version in \cite[Theorem
4]{hk}.
\begin{theorem}\label{bgkhk}
Let $I=\{\z_j\}_{j=1,\dots, k}$ be a finite subset of $\bt$, $f\in
\ca(\bd)$, $|f(0)|=1$, and for $p',q',s\ge 0$
$$
\log |f(z)|\le \frac {K|z|^s}{\di^{p'}(z,\bt)\, \di^{q'}(z, I)}\,,
\quad z\in \bd.
$$
Then for any $0<\eps<1$
$$
\sum_{z\in Z_f} \frac{\di^{p'+1+\eps}(z,\bt)}{|z|^{(s-1+\eps)_+}}\, \di^{(q'-1+\eps)_+}(z,I)\le C(I,p',q',\eps)\cdot K.
$$
\end{theorem}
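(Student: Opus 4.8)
The plan is to base everything on Jensen's formula, sharpened by a one‑variable ``Tauberian'' lemma that converts a bound on the Nevanlinna counting function into a bound on a weighted sum over the zeros, and then to reach the general statement --- with its point singularities on $\bt$ --- through a local version of the disc estimate.

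Reductions first: dividing by a unimodular constant we may take $f(0)=1$. The \emph{model estimate} I would establish is: if $g\in\ca(\bd)$, $g(0)=1$ and $\log|g(z)|\le K|z|^{s}/(1-|z|)^{\mu}$ on $\bd$, then $\sum_{z\in Z_g}(1-|z|)^{\mu+1+\eps}|z|^{-(s-1+\eps)_+}\le C(\mu,s,\eps)\,K$ for $0<\eps<1$. To prove it, set $N(r)=\sum_{|z|<r}\log\frac r{|z|}$ and $n(r)=\card\{z\in Z_g:|z|\le r\}$; Jensen gives $N(r)=\frac1{2\pi}\int_0^{2\pi}\log|g(re^{i\th})|\,d\th\le \sup_{|z|=r}\log^+|g(z)|\le Kr^{s}/(1-r)^{\mu}$, and the comparison $N\big(\tfrac{1+r}2\big)\ge n(r)\log\tfrac{1+r}{2r}\asymp n(r)(1-r)$ yields $n(r)\le CK(1-r)^{-\mu-1}$ globally and $n(r)\le CKr^{s}$ as $r\to0$. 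Then $\sum_{Z_g}g_0(|z|)=\int_0^1 g_0(t)\,dn(t)$ with $g_0(t)=(1-t)^{\mu+1+\eps}t^{-(s-1+\eps)_+}$; integrating by parts and inserting the bounds on $n$ reduces matters to the convergence of $\int_0^1(1-t)^{\eps-1}dt$ near $t=1$ and of $\int_0^1 t^{\eps-1}dt$ near $t=0$ --- which is exactly where $0<\eps<1$, the exponent $\mu+1+\eps$, and the truncation $(s-1+\eps)_+$ are used. Applied with $\mu=p'$ this settles the case $q'=0$, for which $(q'-1+\eps)_+=0$.

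For $q'>0$ the radial quantity $N(r)$ is blind to the \emph{angular} clustering of zeros at a point $\z_j\in I$ --- which is precisely what $\di^{(q'-1+\eps)_+}(z,I)$ discounts --- so the argument must be localized. The device I would use is a \emph{local version} of the model estimate on a ``lens'' $L\subset\bd$ whose boundary meets $\bt$ in an arc (and, in the relevant cases, contains one point $\z_j$): if $\log|f(z)|\le\Phi(z)$ on $L$ with $\Phi$ of the admissible form, then $\sum_{z\in Z_f\cap L'}W(z)\le C(L,L')\,K$ for every slightly smaller lens $L'$, with $W$ the target weight. Such a statement is obtained by conformally mapping $L$ onto $\bd$ (a half‑disc, hence $\bd$, being the model), which straightens the arc of $\bt$ into $\bt$ and sends $\z_j$ to a boundary point, transporting $\di(z,\bt)$, $\di(z,I)$, $|z|$ and $W$ with controlled distortion, together with a harmonic‑majorant (subordination) step that neutralizes the behaviour of $f$ across the portion of $\pt L$ lying inside $\bd$; one then invokes the model estimate. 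Finally one covers $\bar\bd$ by one ``bulk'' lens on which $\di(\cdot,I)\ge\eta$ (so the $I$‑singularity is absent and the $I$‑weight is $\asymp1$) together with one petal‑lens around each $\z_j$, applies the local estimate on each, and adds the finitely many contributions.

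The crux is thus the local estimate and its conformal/harmonic‑majorant reduction: one must control $f$ on a region that touches $\bt$ only along part of its boundary, and one must push the borderline exponents --- in particular $(q'-1+\eps)_+$ --- through the conformal change without loss, which amounts to repeating, on the transported domain, the same knife‑edge convergence analysis ($\int(1-t)^{\eps-1}$, $\int t^{\eps-1}$) as in the model case. A tidier route that sidesteps the lenses, worth attempting, is to factor $W$ as a geometric mean of ``elementary'' weights --- one retaining only the $\bt$‑singularity and, for each $j$, one retaining only the $\z_j$‑singularity --- each of which should be controllable for the \emph{same} $f$ by a model‑type estimate, and then to recombine the zero‑sums by H\"older's inequality.
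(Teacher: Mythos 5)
First, a point about context: the paper does not prove Theorem~\ref{bgkhk}. It is cited directly as \cite[Theorem 0.2]{bgk}, combined with the $|z|^s$ refinement from \cite[Theorem 4]{hk}. There is therefore no in-paper proof to compare against, and your sketch can only be assessed as an independent argument.

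Your $q'=0$ ``model estimate'' is sound and is, in substance, the proof of \cite[Theorem 0.1]{bgk}: Jensen's formula, the comparison $N\big(\tfrac{1+r}{2}\big)\ge c\,n(r)(1-r)$ (and $N(2r)\ge n(r)\log 2$ near the origin) to bound $n(r)$, then Abel summation reducing to the borderline integrals $\int(1-t)^{\eps-1}dt$ and $\int t^{\eps-1}dt$ --- which is exactly where $p'+1+\eps$ and $(s-1+\eps)_+$ come from. The genuine gap is in the petal-lens step for $q'>0$. Any conformal map $\psi:L\to\bd$ of a lens $L$ with $\z_j\in\pt L$ sends $\z_j$ to a \emph{boundary} point $\z_j'\in\bt$, so the transported majorant still has a pole on the boundary, of the form $K/\bigl((1-|w|)^{p'}\,|w-\z_j'|^{q'}\bigr)$. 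This is not an admissible hypothesis for your model estimate, whose bound has no boundary singularity, so one cannot ``then invoke'' it. Absorbing the pole by the crude $|w-\z_j'|\ge 1-|w|$ lands you in the model class with $\mu=p'+q'$, but the resulting conclusion $\sum(1-|w|)^{p'+q'+1+\eps}\le CK$ is strictly weaker than the target: in the critical regime $\di(z,I)\asymp 1-|z|$ the desired weight is $\asymp(1-|z|)^{p'+q'+2\eps}$, and $p'+q'+2\eps<p'+q'+1+\eps$ for $\eps<1$. In other words, the localization reduces general finite $I$ to a single boundary point, but the single-point case is precisely the nontrivial half of the theorem, and your sketch leaves it unproved. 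The closing H\"older/geometric-mean idea has the same defect: each ``elementary'' weight retaining one $\z_j$-singularity already requires the very estimate you are trying to establish, so nothing is gained by the recombination.
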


Our goal here is to prove a local version of the above
result (cf. \cite[Theorem 7]{fg2}).

Let $G\subset \bar\bd$ be an open circular polygon, $0\in G$, with
vertices $\{u_i\}\in\bt$, and sides (arcs) $\tau_i=[u_i,u_{i+1}]$,
$i=1,2,\ldots,2N$, $u_{2N+1}=u_1$ (see Figure \ref{f1}). The arcs
$\tau_{2j}$ lie on $\bt$, and $\tau_{2j-1}$, which we call the
inner sides of $G$, lie on some orthocircles, that is, circles
orthogonal to $\bt$. Put
$$
\dd_1=\pt G \cap \bd=\lt\{\bigcup^N_{j=1} (u_{2j-1}, u_{2j})
\rt\}, \quad \dd_2=\pt G\cap\bt=\lt\{\bigcup^N_{j=1} [u_{2j},
u_{2j+1}] \rt\},
$$
so
$$
\pt G=\dd_1\cup\dd_2.
$$

Let  $E=\{\z_j\}_{j=1,\dots, k}\subset \dd_2$ be a selected finite
subset of the unit circle. We take $\ti G\subset G$ to be a
properly ``shrunk'' circular polygon, in such a way that $E\subset
\ti\dd_2$, see again Figure \ref{f1}. The notation for $\ti G$ is
the same as for $G$ up to ``waves" referring to the first set. So,
for instance, the vertices of $\ti G$ are $\ti u_i$,
$$
\pt\ti G=\ti\dd_1\cup\ti\dd_2, \quad \ti\dd_1= \pt \ti G \cap \bd,
\quad \ti\dd_2=\pt\ti G \cap \bt.
$$
It is important that $\min_j \di(\tau_{2j-1},\ti
\tau_{2j-1})=\di'>0$, $j=1,2,\ldots,N$.

Consider a conformal map $w, \ w: \bd\to G$, normalized by
$w(0)=0$, $w'(0)>0$. Sometimes, to indicate explicitly the
variables, we will write $w: \bd_z\to G_w$.

Put $\ti D=w^{-1}(\ti G) \subset \bd_z$ and introduce
\begin{itemize}
\item preimages of vertices $v_j=w^{-1}(u_j), \ti v_j=w^{-1}(\ti
u_j), \quad j=1,\dots, 2N$, \item preimages of sides
$\ti\tau_j=w^{-1}(\tau_j)\subset \bt_z, \quad j=1,\dots, 2N$.
\item preimages of selected points $I=\{\xi_j=w^{-1}(\z_j)\}$,
$j=1,\dots, k$.
\end{itemize}
Clearly, $I$ is contained in the closure of $\ti D$.

\begin{figure}[t]
\includegraphics[width=14cm]{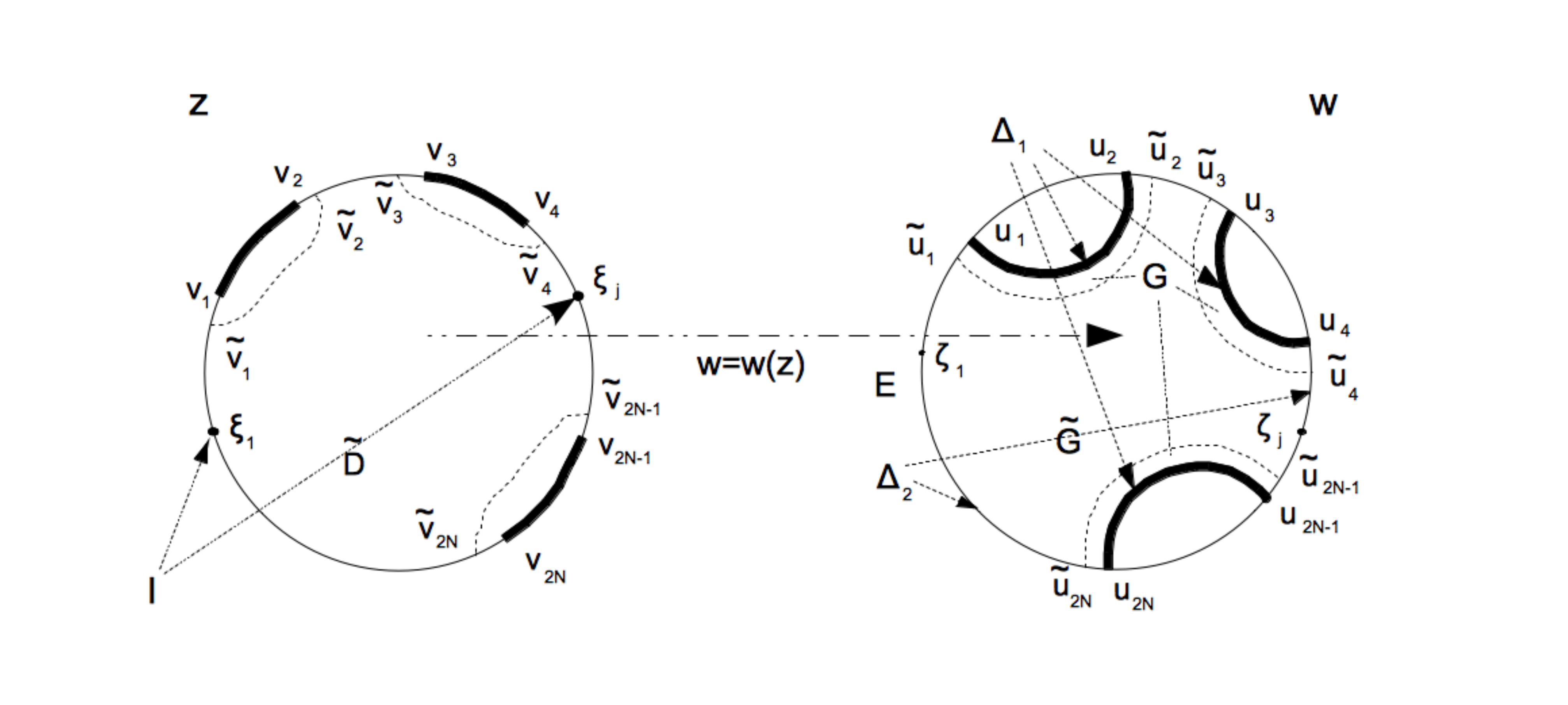}
\caption{The domains $G, \ti G$ and the map $w$.}\label{f1}
\end{figure}

For short, we write $w=w(z)$. Here is a couple of elementary properties of $w$:
\begin{itemize}
\item $\di(z,\bt_z)=1-|z|\le 1-|w|=\di(w,\bt_w)$ by the Schwarz
lemma. \item By \cite[Corollary 1.4]{pom}, $\di(w,\pt G)\asymp
|w'(z)|\, (1-|z|)=|w'(z)|\, \di(z,\bt)$. Since $z\in \ti D$ if and
only if $w\in\ti G$, and $|w'(z)|\asymp 1$ for $z\in\ti D$, then
\begin{equation}\label{0001}
\di(w,\pt G)\asymp \di(z,\bt), \quad z\in\ti D. \end{equation}
\end{itemize}
Here and in what follows the equivalence relation $A\asymp B$
means that $c_1\le A/B \le c_2$ for generic positive constants
$c_i$ which depend only on $G$ and $E$. Similarly,
\begin{equation}\label{001}
\di(w,E)\asymp \di(z,I), \quad z\in\bd.
\end{equation}
Indeed, for $z\in\ti D$, $w\in\ti G$, we have $|w'(z)|,
|z'(w)|\asymp 1$. For $z\in \bd\backslash{\ti D}$ both sides in
\eqref{001} are equivalent to $1$.

\medskip

Let now $f\in \ca(G)$, $|f(0)|=1$, and assume that for some
$p',q',s\ge 0$
\begin{equation}\label{e6}
\log |f(w)|\le \frac {K|w|^s}{\di^{p'}(w,\bt)\, \di^{q'}(w, E)}\,,
\quad w\in G.
\end{equation}
Consider a function $F(z)=f(w(z))\in \ca(\bd_z)$. By using the first
property of $w$, equivalence $|w|\asymp |z|$, and \eqref{001}, we
obtain
$$
\log |F(z)|\le \frac{K|z|^s}{\di^{p'}(z,\bt)\, \di^{q'}(z, I)}\,,
\quad z\in\bd.
$$
Theorem \ref{bgkhk} now implies
$$
\sum_{z\in Z_F} \frac{\di^{p'+1+\eps}(z,\bt)}{|z|^{(s-1+\eps)_+}}\, \di^{(q'-1+\eps)_+}(z,I)\le C\cdot K
$$
for any $0<\eps<1$, and, by far,
$$
\sum_{z\in \ti D\cap Z_F} \frac{\di^{p'+1+\eps}(z,\bt)}{|z|^{(s-1+\eps)_+}}\, \di^{(q'-1+\eps)_+}(z,I)\le C\cdot K.
$$
Of course, $Z_f=w(Z_F)$, so by \eqref{0001} and \eqref{001}
$$
\sum_{w\in \ti G\cap Z_F}\frac{\di^{p'+1+\eps}(w, \pt G)}{|w|^{(s-1+\eps)_+}}\, \di^{(q'-1+\eps)_+}(w,E)\le
C\cdot K.
$$

Let us show that $\di(w,\pt G)\ge C\di(w,\bt_w)$, as long as
$w\in\ti G$. Indeed, if $\di(w,\pt G)=\di(w,\dd_2)$, then
$\di(w,\pt G)\ge \di(w, \bt_w)$. Otherwise, $\di(w,\pt G)=\di(w,
\dd_1)$, so $\di(w, \pt G)\ge \di'$ and
$$
\di(w,\bt_w)=1-|w|\le 1\le \frac{\di(w, \pt G)}{\di'}\,,
$$
as claimed. Hence
\begin{equation}\label{e7}
\sum_{w\in \ti G\cap Z_F} \frac{\di^{p'+1+\eps}(w,\bt_w)}{|w|^{(s-1+\eps)_+}}\, \di^{(q'-1+\eps)_+}(w,E)\le C\cdot K.
\end{equation}

That is, we have proven

\begin{theorem}\label{t4}
Let $f\in \ca(G), |f(0)|=1$, and for $p',q',s\ge 0$
$$
\log |f(w)|\le \frac {K|w|^s}{\di^{p'}(w,\bt)\, \di^{q'}(w, E)}\,,
\quad w\in G.
$$
Then, \eqref{e7} holds  for any $0<\eps<1$.
\end{theorem}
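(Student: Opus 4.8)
The statement of Theorem \ref{t4} is essentially a restatement of the chain of inequalities derived just above it in the excerpt, so the proof proposal is really a proposal for organizing that derivation cleanly. The plan is to reduce the estimate on the circular polygon $G$ to Theorem \ref{bgkhk} on the unit disk $\bd$ via the conformal map $w\colon\bd_z\to G_w$, and then transport the resulting sum back to $G$ using the distortion estimates \eqref{0001} and \eqref{001}, finally discarding the boundary arcs $\dd_1$ in favor of $\bt_w$ on the shrunken polygon $\ti G$.

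Concretely, I would proceed in four steps. \emph{Step 1 (pullback of the hypothesis).} Set $F(z)=f(w(z))\in\ca(\bd_z)$; since $w(0)=0$ we have $|F(0)|=|f(0)|=1$. Using the Schwarz-lemma bound $\di(z,\bt_z)\le\di(w,\bt_w)$, the elementary two-sided bound $|w|\asymp|z|$ for the conformal map fixing the origin, and the equivalence $\di(w,E)\asymp\di(z,I)$ from \eqref{001}, the growth hypothesis \eqref{e6} on $f$ transforms into
$$
\log|F(z)|\le\frac{K|z|^s}{\di^{p'}(z,\bt)\,\di^{q'}(z,I)}\,,\qquad z\in\bd.
$$
\emph{Step 2 (apply Theorem \ref{bgkhk}).} This yields, for any $0<\eps<1$,
$$
\sum_{z\in Z_F}\frac{\di^{p'+1+\eps}(z,\bt)}{|z|^{(s-1+\eps)_+}}\,\di^{(q'-1+\eps)_+}(z,I)\le C\cdot K,
$$
and a fortiori the same bound holds for the subsum over $z\in\ti D\cap Z_F$, where $\ti D=w^{-1}(\ti G)$.

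\emph{Step 3 (push forward to $\ti G$).} Since $w$ is a bijection, $Z_f=w(Z_F)$ and $z\in\ti D\iff w\in\ti G$. On $\ti D$ we have $|w'(z)|\asymp1$, hence by \cite[Corollary 1.4]{pom} the distance equivalence \eqref{0001}, namely $\di(w,\pt G)\asymp\di(z,\bt)$, together with \eqref{001} and $|w|\asymp|z|$, converts the subsum of Step 2 into
$$
\sum_{w\in\ti G\cap Z_F}\frac{\di^{p'+1+\eps}(w,\pt G)}{|w|^{(s-1+\eps)_+}}\,\di^{(q'-1+\eps)_+}(w,E)\le C\cdot K.
$$
\emph{Step 4 (replace $\pt G$ by $\bt_w$).} It remains to check that $\di(w,\pt G)\ge C\,\di(w,\bt_w)$ for $w\in\ti G$. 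If the nearest boundary point lies on $\dd_2\subset\bt$, this is trivial since then $\di(w,\pt G)=\di(w,\dd_2)\ge\di(w,\bt_w)$. Otherwise $\di(w,\pt G)=\di(w,\dd_1)\ge\di'>0$ because $\ti G$ was chosen with $\min_j\di(\tau_{2j-1},\ti\tau_{2j-1})=\di'>0$, while $\di(w,\bt_w)=1-|w|\le 1\le\di(w,\pt G)/\di'$. Substituting this into the previous display gives exactly \eqref{e7}, which is the assertion.

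The only genuinely substantive input is the classical boundary-distortion estimate $\di(w,\pt G)\asymp|w'(z)|(1-|z|)$ from Pommerenke \cite[Corollary 1.4]{pom}, combined with the uniform bound $|w'(z)|\asymp1$ on the shrunken polygon $\ti D$; the separation constant $\di'>0$ between the inner arcs of $G$ and $\ti G$ is what prevents the inner sides from causing trouble in Step 4. Everything else is routine bookkeeping with conformal invariance. The one point requiring a little care — and what I would treat as the main (if modest) obstacle — is justifying $|w'(z)|\asymp1$ on $\ti D$: this uses that $\ti G$ is compactly contained in $G$ away from the inner arcs, so that $w^{-1}(\ti G)$ stays at positive distance from the preimages of the orthocircular sides, where $w'$ could degenerate or blow up; near the arcs $\dd_2\cap\ti\dd_2$ on the unit circle the map is conformal across $\bt$ by reflection, so $w'$ is bounded above and below there as well.
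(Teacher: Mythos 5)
Your proposal reproduces the paper's own argument step for step: pull back the growth hypothesis to $\bd$ via $F=f\circ w$ using the Schwarz lemma, $|w|\asymp|z|$, and \eqref{001}; apply Theorem \ref{bgkhk}; restrict to $\ti D$ and push forward to $\ti G$ via \eqref{0001} and \eqref{001}; and finally replace $\di(w,\pt G)$ by $\di(w,\bt_w)$ using the two-case argument with the separation constant $\di'$. Your added justification of $|w'|\asymp 1$ on $\ti D$ (reflection across the circular arcs, compact containment away from the orthocircles) is a correct elaboration of a point the paper leaves implicit.
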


It goes without saying that the similar counterpart of Theorem 0.3
from \cite{bgk} is also valid in the present setting.

\section{Uniformization, Fuchsian groups, and all that}\label{s2}

In this section we are aimed at proving Theorem \ref{t2} with the
help of Theorem \ref{t4}.

We start reminding  the celebrated uniformization theorem of
Klein--Koebe--Poincar\'e \cite[Ch. III]{as}, which is one of the
key ingredients of the proof. The result is valid for arbitrary
Riemann surfaces, but we will formulate it for the so called
planar domains, since this is enough for our purposes. Recall that
a discrete group of  M\"obius transformations $\gg$ (of $\bd$ on
itself) is called a Fuchsian group. The discreteness means that
any orbit $\{\g(z)\}_{\g\in \gg}$ is a discrete set in the
relative topology of $\bd$.

Let $\oo\subset \bar\bc$  be a domain with the boundary containing
more than two points, and $\l_0\in\oo$. The uniformization theorem
says that there exists a covering map $\l: \bd\to \oo$, which is
unique provided the normalization conditions $\l(0)=\l_0$,
$\l'(0)>0$ are set. Moreover, the map is automorphic with respect
to a certain Fuchsian group $\gg$, i.e., $\l\circ\g=\l$ for any
$\g\in\gg$. Symbolically, we write
$$
\oo \simeq  \bd/\gg,
$$
where  two points $z,w\in\bd$ are equivalent with respect to $\gg$
if and only if there is a $\g\in\gg$ such that $w=\g(z)$. For
further terminology on the subject, we refer to \cite[Ch.
III]{as}, \cite{fo}; see also Simon \cite{si3} for a recent
presentation.

We will focus upon the special case
$\oo=\bar\bc\bsl\fe$, described in \eqref{e0}. The standard
normalization now is
\begin{equation}\label{122}
\l(0)=\infty, \qquad \lim_{w\to0}w\l(w)=\kappa(\fe)>0.
\end{equation}
The properties of the Fuchsian group $\gg$ in this  situation are
well-studied, see \cite[Chapter 9.6]{si3}. In particular, $\gg$ is
a free nonabelian group with $n$ generators $\{\g_j\}_{j=1}^n$.
\if{Every generator $\g_j$ is hyperbolic and given by the
reflection with respect to an orthocircle $c_j$ followed by the
conjugation. The circles $\ti c_j$ are disjoint, see Figure
\ref{f2}, and they define completely the group $\gg$.}\fi The
fundamental domain $\cf$ (more precisely, its interior
$\cf^{int}$) is a circular polygon in $\bd$, its topological
boundary in $\bd$ consists of $n$ orthocircles in $\bc_+$ and
their complex conjugates, and there is a finite distance in
$\overline\bd$ between the different orthocircles, see Figure 2.
We label the vertices of $\cf$ by $E=\l^{-1}(e)=\{w_j\}$.

The following relations for the covering map are crucial in the
sequel.
\begin{lemma}\label{lem1}
Let $w\in\overline\cf$, closure in $\overline\bd$, and $\l=\l(w)$.
Then
\begin{equation}\label{101}
\di(\l,e)\asymp \frac{\di^2(w,E)}{|w|} \end{equation} and
\begin{equation}\label{102}
\di(\l,\fe)\asymp \frac{\di(w,\bt_w)\di(w,E)}{|w|}\,.
\end{equation}
\end{lemma}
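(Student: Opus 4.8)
I would prove both relations by covering the compact set $\overline\cf\subset\overline\bd$ by three families of pieces, identifying the local form of the covering map $\l$ on each, checking \eqref{101} and \eqref{102} there, and patching the estimates; all implied constants will depend only on $\fe$ (equivalently on $\gg$). Throughout I use two standard features of this uniformization: $\overline\cf$ meets $\bt_w$ in finitely many closed arcs whose endpoints are exactly the vertices $E=\l^{-1}(e)$, and $\l$ extends holomorphically to a neighbourhood of $\overline\cf$ — across the interior of each bounding arc by Schwarz reflection (there $\l$ carries $\bt_w$ into the line $\br\supset\fe$), and across each vertex $w_j\in E$ as in the third step below. Fix a small $\d>0$, to be shrunk a finite number of times.

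\emph{The two routine regions.} For $|w|<\d$ the normalization \eqref{122} gives $\l(w)=\kappa(\fe)/w+O(1)$, hence $|\l(w)|\asymp|w|^{-1}$; as $e$ and $\fe$ are bounded, $\di(\l,e)\asymp\di(\l,\fe)\asymp|w|^{-1}$, while $\di(w,E)\asymp\di(w,\bt_w)\asymp1$, so \eqref{101} and \eqref{102} both read $|w|^{-1}\asymp|w|^{-1}$. For $|w|\ge\d$ and $\di(w,E)\ge\d$ one has $|w|\asymp1$, $\di(w,E)\asymp1$, and $\di(\l,e)\asymp1$ (this quantity is continuous and nonvanishing on a compact set, since $\l^{-1}(e)\cap\overline\cf=E$ sits at distance $\ge\d$), so \eqref{101} reads $1\asymp1$ and \eqref{102} reduces to $\di(\l,\fe)\asymp\di(w,\bt_w)$; if moreover $\di(w,\bt_w)\ge\d$ both sides are $\asymp1$, and if $\di(w,\bt_w)<\d$ then $w$ lies near the interior of a bounding arc, away from the vertices, where $\l$ is conformal, hence bi-Lipschitz, and sends the nearest $\bt_w$-point of $w$ to the nearest $\fe$-point of $\l(w)$.

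\emph{The vertex regions.} Fix $w_j\in E$, $\l(w_j)=e_j$, choose a M\"obius map $\Psi\colon\bc_+\to\bd$ with $\Psi(0)=w_j$ and $|\Psi'|\asymp1$ near $0$, and set $\phi=\l\circ\Psi\colon\bc_+\to\oo$. Since both banks of the slit $\fe_j$ lie on $\br$, $\phi$ carries a real neighbourhood of $0$ into $\fe_j\subset\br$, so it extends holomorphically across $\br$ with real Taylor coefficients; and because the opening of $\oo$ at the tip $e_j$ of $\fe_j$ is $2\pi$, the covering is two-sheeted there, i.e. $\phi(\zeta)-e_j=\kappa_j\zeta^2(1+O(\zeta))$ with $\phi'(0)=0\ne\phi''(0)$. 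Let $\omega=\omega(\zeta)$ be the holomorphic square root of $\pm(\phi(\zeta)-e_j)$, sign chosen so the leading coefficient is real and positive (this merely records which way $\fe_j$ leaves $e_j$, the two cases being symmetric); then $\omega$ is a conformal change of variable near $0$ with real coefficients, $\omega(0)=0$, $\omega'(0)>0$, it maps $\bc_+\cap\{|\zeta|<\d\}$ into $\bc_+$ and $\br\cap\{|\zeta|<\d\}$ into $\br$, and $\phi(\zeta)-e_j=\pm\omega^2$. With $w=\Psi(\zeta)$ one gets $|w|\asymp1$, $\di(w,E)\asymp|\zeta|\asymp|\omega|$, $\di(w,\bt_w)\asymp\im\zeta\asymp\im\omega$, together with $\di(\l,e)=|\phi(\zeta)-e_j|=|\omega|^2$ and $\di(\l,\fe)=\di(\phi(\zeta),\fe_j)\asymp\di(\omega^2,\br_{\ge0})$. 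Hence \eqref{101} follows from $|\omega|^2\asymp\di^2(w,E)/|w|$, and \eqref{102} from the elementary model estimate $\di(\omega^2,\br_{\ge0})\asymp(\im\omega)\,|\omega|$ for $\omega\in\bc_+$, which one checks by splitting into $\arg\omega\in(0,\tfrac\pi4)$, $[\tfrac\pi4,\tfrac{3\pi}4]$ and $(\tfrac{3\pi}4,\pi)$.

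For $\d$ small enough (the vertices are finitely many, distinct, and on $\bt_w$) the three families cover $\overline\cf$, and concatenating the estimates yields \eqref{101} and \eqref{102} for all $w\in\overline\cf$. I expect the only genuine obstacle to be the vertex step: pinning down that the covering is exactly two-sheeted over a neighbourhood of each band edge — equivalently $\phi'(0)=0\ne\phi''(0)$ — which hinges on the $2\pi$-opening of $\oo$ at the slit tip together with $\fe_j\subset\br$, followed by the short model computation for $\di(\omega^2,\br_{\ge0})$; everything else is routine distortion-and-compactness bookkeeping.
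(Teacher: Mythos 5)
Your proposal is correct, and for the harder half of \eqref{102} it takes a genuinely different route from the paper. The treatment of \eqref{101} and of the two ``routine'' regions coincides with the paper's (normalization \eqref{122} near $w=0$, compactness away from $E$, and the quadratic behaviour of $\l$ at the points of $E$, which is exactly the property the paper quotes from \cite[Theorem 9.6.4]{si3} and which you re-justify via the $2\pi$-opening at a slit tip). The difference is in \eqref{102}: the paper first proves only the upper bound $\di(\l,\fe)\le C\di(w,E)\di(w,\bt_w)$ by a mean-value estimate on $\l'$, and then gets the lower bound through a dichotomy ($\di(\l,\fe)\ge C\di(\l,e)$ versus $\di(\l,\fe)\le\d\,\di(\l,e)$), handling the second case with the Pommerenke-type distortion relation \eqref{1005} applied to a semidisk and the estimate $|\l'(w)|\asymp\di(w,E)$ on the preimages of small triangles over the bands. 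You instead obtain both bounds at once by purely local analysis: the square-root change of variable at each vertex reduces \eqref{102} to the elementary model inequality $\di(\omega^2,\br_{\ge0})\asymp(\im\omega)\,|\omega|$, $\omega\in\bc_+$, and along the free boundary arcs away from $E$ you use that the reflected map is conformal with nonvanishing derivative, hence locally bi-Lipschitz. Your version is more self-contained and avoids the conformal-distortion machinery at the price of more chart bookkeeping; the paper's is shorter given \cite[Corollary 1.4]{pom}. Two small inaccuracies, neither fatal: the endpoints of the arcs of $\overline\cf\cap\bt_w$ are contained in $E$ but need not exhaust it (points of $E$ on $\br$, such as the preimages of the extreme band edges, lie in the interior of those arcs), though your argument only needs that all arc endpoints belong to $E$; and the phrase about $\l$ ``sending the nearest $\bt_w$-point to the nearest $\fe$-point'' is not literally true --- what you actually use, and what does suffice, is the two-sided bi-Lipschitz bound together with the fact that the arc is mapped into $\fe$, so that the nearest point of $\fe$ to $\l(w)$ is either far away or has a preimage on the arc near $w$. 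Also, the exact two-sheetedness at a band edge is asserted with the right reason but not fully proved; since this is precisely the fact the paper cites from \cite{si3}, citing it would close that point.
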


\begin{proof}
In the case $w\in B(0,r)$ both \eqref{101} and \eqref{102} are obvious, since
$$ \di(\l,e)\asymp \di(\l,\fe)\asymp |\l|\asymp \frac1{|w|}\,,
\quad \di(w,\bt_w)\asymp \di(w,E)\asymp 1 $$
by \eqref{122}. So we assume $|w|\ge r$.

Put
$$ B_j:=B_w(w_j,r)\cap\cf^{int}, \quad  B:=\bigcup B_j, $$
with small enough $r=r(\fe)$, so $B_j$ are disjoint.
The argument is based on the properties of the covering map (cf., e.g., \cite[Theorem 9.6.4]{si3}):
\begin{enumerate}
\item $\l$ can be extended analytically to a certain domain, which contains $\overline{\cf^{int}}$;
\item $\l$ is one-one in $\cf^{int}$, and $\l'(w)=0$ if and only if $w=w_j$;
\item for $w\in B_j$, we have
\begin{equation}\label{1001}
\l(w)=\l(w_j)+ C_j(w-w_j)^2+O((w-w_j)^3),
\end{equation}
and $C_j\not=0$.
\end{enumerate}
\begin{figure}[t]
\includegraphics[width=14cm]{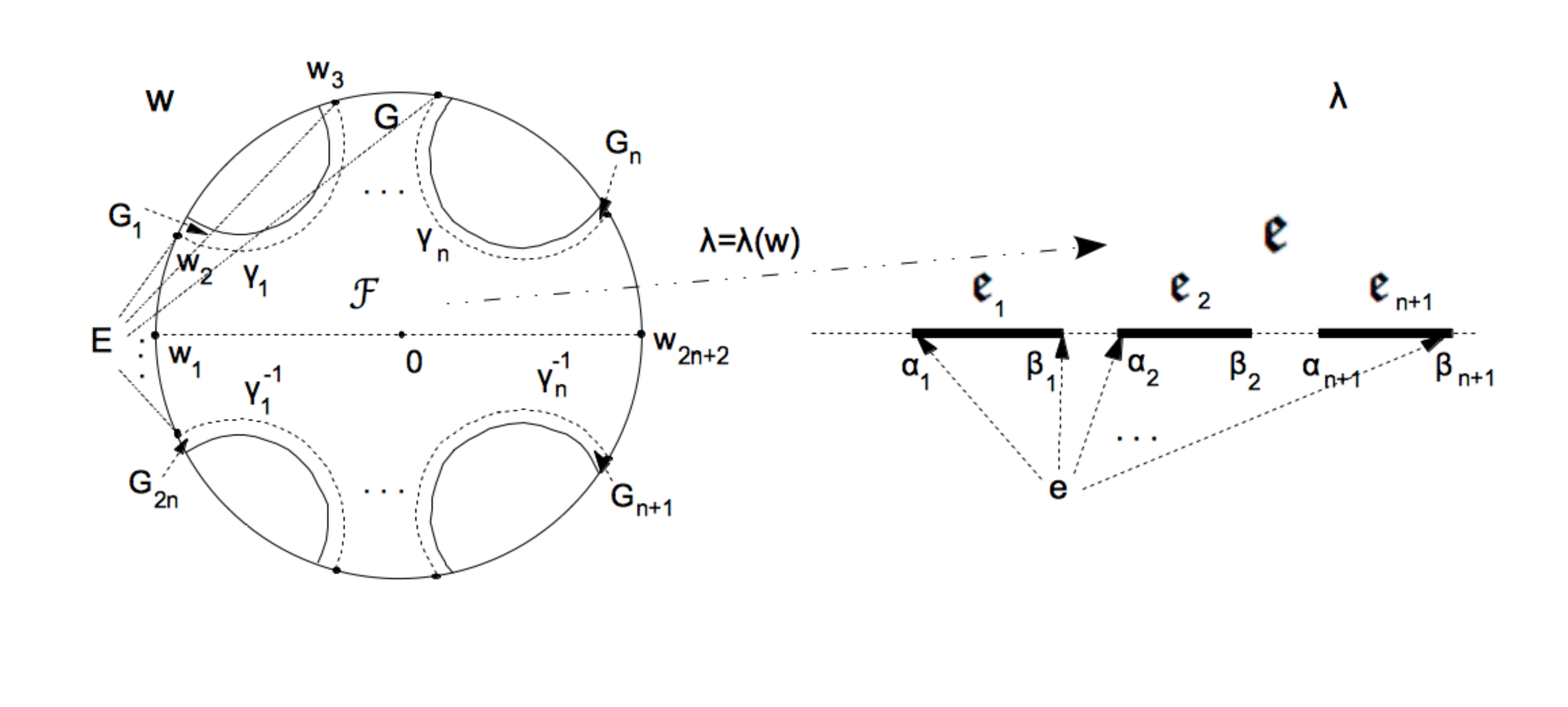}
\caption{Uniformization of the domain $\oo$ and the map $\l$.}\label{f2}
\end{figure}
By \eqref{1001}, we have for $w\in B_j$
$$ \di(\l,e)=|\l(w)-\l(w_j)|\asymp |w-w_j|^2=\di(w,E)^2\asymp
\frac{\di(w,E)^2}{|w|}\,, $$ so \eqref{101} is true on $B$.
For $w\in\overline{\cf^{int}}\backslash(B\cup B(0,r))$
$$ \di(\l,e)\asymp \di(w,E)\asymp |w|\asymp 1, $$
and the proof of \eqref{101} is complete.

To prove \eqref{102} for $|w|\ge r$ we begin with its simple half
\begin{equation}\label{1003}
\di(\l,\fe)\le C\di(w,E)\,\di(w,\bt_w).
\end{equation}
For $w\in B_j$ take $\zeta \in\bt_{\cf}=\bt\cap\overline{\cf}$ so
that $|w-\zeta|=\di(w,\bt_{\cf})$. By \eqref{1001}
\begin{equation*}
\begin{split}
|\l(\zeta)-\l(w)| &\le \max_{z\in[w,\zeta]}|\l'(z)|\,|\zeta-w| \le
C|w-w_j|\,|\zeta-w| \\ &=C\di(w,E)\,\di(w,\bt_{\cf}).
\end{split}
\end{equation*}
Since $|\l(\zeta)-\l(w)|\ge \di(\l,\fe)$ and $\di (w,\bt_w)\asymp
\di(w,\bt_{\cf})$, \eqref{1003} holds for $w\in B_j$. The similar argument applies in the case \newline $w~\in~\overline{\cf^{int}}~\backslash(B~\cup B(0,r))$, where $|\l'|\asymp 1$, so \eqref{1003} is proved.

Suppose next, that $\di(\l,\fe)\ge C\di(\l,e)$. Then by \eqref{101} for $|w|\ge r$
$$ \di(\l,\fe)\ge C\di^2(w,E)\ge C\di(w,E)\di(w,\bt_w), $$
which is opposite to \eqref{1003}, so \eqref{102} is true. Hence it remains to consider the case
\begin{equation}\label{1004}
\di(\l,\fe)\le \d\,\di(\l,e), \end{equation}
$\d$ is small enough.

We apply a version of \cite[Corollary 1.4]{pom}, which reads
\begin{equation}\label{1005}
 \di(g,\pt\oo_2)\asymp |g'(w)|\,\di(w,\pt\oo_1), \end{equation}
$g:\,\oo_1\to\oo_2$ is a conformal map of bounded domains $\oo_j$.
Let $\oo_2=B(0,R)\cap\bc_-$ be a large semidisk, such that $\fe\subset\pt\oo_2$, $g=\l$ restricted on the preimage of the later set (the part of $\cf^{int}$ in the upper half plane away from the origin). The part of \eqref{1004} in $\bc_-$ is a union $T=\cup T_j$ of small isosceles triangles $T_j$ with bases $\fe_j$. It is clear from the properties of the covering map that
$$ \di(\l,\pt\oo_2)=\di(\l,\fe), \qquad \l\in T, $$
$$ \di(w,\pt\oo_1)\asymp \di(w,\bt_w), \quad |\l'(w)|\asymp \di(w,E), \qquad w\in\l^{(-1)}(T), $$
so by \eqref{1005}
$$ \di(\l,\fe)\asymp \di(w,E)\cdot\di(w,\bt_w). $$
The proof is complete.
\end{proof}

\medskip\nt
{\it Proof of Theorem \ref{t2}.}  Let $\l=\l(w): \bd_w\to \oo_\l$
be the covering map with normalization \eqref{122}, $\gg$ the
corresponding Fuchsian group with generators $\{\g_j\}_{j=1}^n$, $E=\l^{-1}(e)$ the vertices of $\cf$. Put $\g_{2n+1-k}:=\g_k^{(-1)}$, $k=1,\ldots,n$.

Let $f\in\ca(\oo)$ satisfy \eqref{e3}. It is clear that
$|f(\infty)|=1$.  We put $F(w):=f(\l(w))$. Then $F\in\ca(\bd)$ and
automorphic with respect to $\gg$. By Lemma \ref{lem1}
\begin{equation}\label{e8}
\log |F(w)|\le \frac{K_1|w|^{p+q}}{\di^p(w,\bt) \di^{p+2q}(w,
E)}\,, \quad w\in\cf.
\end{equation}

The special structure of $\gg$ and $\cf$ enables one to "inflate"
the domain $\cf^{int}$ slightly to get another polygon $G$, so
that
$$ \cf\subset G\subset \cf\bigcup\lp\bigcup_{j=1}^{2n}\g_j(\cf)\rp, \quad \g_{n+k}(\cf)=\overline{\g_k(\cf)}, \quad k=1,\ldots,n.
$$
The distance between the corresponding inner sides of $G$ and
$\cf^{int}$ is strictly positive.

It is not hard to see that bound \eqref{e8} actually holds in the
bigger polygon $G$. Indeed, let $G_j\subset G\bsl\cf^{int}$ be an
``annular segment" between the corresponding inner sides of $G$
and $\cf^{int}$, so $G\bsl \cf^{int}=\cup^{2n}_{j=1} G_j$. We have
to check \eqref{e8} on each $G_j$. For $w\in G_j$ there is a
unique $z\in\cf^{int}$ so that $w=\g_j(z)$. Since
\begin{eqnarray*}
\di(w,\bt)&=&\di(\g_j(z),\g_j(\bt))\asymp d(z, \bt), \\
\di(z, E)&=&\di(\g_j^{-1}(w), E)\asymp \di(w, \g_j(E))\ge C\di(w,
E),
\end{eqnarray*}
where we used in an essential way that the number of generators is
finite, we see that for $w\in G_j$
\begin{eqnarray*}
\log|F(w)|&=&\log |F(z)|\le \frac{K_1|z|^{p+q}}{\di^p(z,\bt)\,
\di^{p+2q}(z,E)}\\
 &\le&\frac{CK_1|w|^{p+q}}{\di^p(w,\bt)\,\di^{p+2q}(w,E)},
\end{eqnarray*}
the first equality being exactly the automorphic property of  $F$.
Theorem~\ref{t4} with $s=p+q$ then yields
\begin{equation}\label{1111}
\sum_{w\in \ti G\cap Z_F}
\frac{\di^{p+1+\eps}(w,\bt_w)}{|w|^{(p+q-1+\eps)_+}}\,
\di^{(p+2q-1+\eps)_+}(w,E)\le C\cdot K_1
\end{equation}
for $0<\eps<1$, where $\ti G$ is another polygon with
$\cf^{int}\subset\ti G\subset G$. The more so, the same inequality
holds for $w\in\overline{\cf^{int}}\cap Z_F$.

It remains only to go back to $f\in\ca(\oo)$ and its zero set
$Z_f$. Note that although each point from $Z_f$ has infinitely
many preimages in $\bd$, we can restrict ourselves with those in
$\overline{\cf^{int}}$.  It follows easily from the properties of the covering map (see the proof of Lemma
\ref{lem1}) that $1+|\l|\asymp \frac 1{|w|}$. Hence, \eqref{101} yields
$$
\di(w,E) \asymp \lt(\frac{\di(\l,e)}{1+|\l|}\rt)^{1/2},
$$
and, with the help of \eqref{102}
$$
\di(w,\bt_w)\asymp \frac{\di(\l,\fe)}{(\di(\l,e)\, (1+|\l|))^{1/2}}.
$$
Substitution of the above relations in \eqref{1111} gives \eqref{e4}, and the proof of Theorem \ref{t2} is complete.
\hfill $\Box$

\section{Applications to complex perturbations of a finite-band selfadjoint operator}\label{s3}

Consider a bounded finite-band selfadjoint operator $A_0$, defined
on $H$.  Let $A=A_0+B$, $B\in \css_p$, with $p\ge 1$, $B$ is not
supposed to be selfadjoint.

The Schatten classes $\css_p$ form a nested family of operator
ideals, that is,
\begin{enumerate}
    \item if $p<q$, then $\css_p\subset\css_q$ and
    $\|\cdot\|_{\css_q}\le \|\cdot\|_{\css_p}$;
    \item if $P$ is a bounded operator, and $Q\in\css_p$, then
    $PQ,QP\in\css_p$ and $\|PQ\|_{\css_p}, \|QP\|_{\css_p}\le
    \|P\|\|Q\|_{\css_p}$.
\end{enumerate}
More information on the classes $\css_p$ can be found in monographs \cite{kr1} and \cite{si1}.

Given $p\ge1$ put $\lceil p\rceil:=\min\{j\in\bn:\ j\ge p\}$. The
following object known as a {\it regularized perturbation determinant}
$$
g_p(\l):=\det{}_{\lceil p\rceil} (A-\l)(A_0-\l)^{-1}
$$
is well defined, $g_p\in\ca(\oo)$,
$\oo=\overline\bc\backslash\s(A_0)$. The basic property of $g_p$
relates its zero set and the discrete spectrum of $A$:

$\l\in Z_{g_p}$ with order $k$ if and only if $\l\in\s_d(A)$ with
algebraic multiplicity $k$.

Furthermore, for $\l\in \oo$ the bound
$$ \log|g_p(\l)|\le C_p\,\|(A_0-\l)^{-1}\|^p\,\|B\|^p_{\css_p} $$
holds, see, e.g., \cite{si1}. For the selfadjoint and finite-band
operator $A_0$ the latter turns into
$$ \log|g_p(\l)|\le C_p\,\frac{\|B\|^p_{\css_p}}{\di^p(\l,\fe)}, $$
which is exactly \eqref{e1}.

Theorem \ref{t3} thus follows by a straightforward application of
Corollary~\ref{t1}.

\end{document}